\theoremstyle{plain}
\newtheorem{condition}{Condition}
\newtheorem{corollary}{Corollary}
\newtheorem{definition}{Definition}
\newtheorem{example}{Example}
\newtheorem{proposition}{Proposition}
\newtheorem{remark}{Remark}
\newtheorem{theorem}{Theorem}
\numberwithin{equation}{section}
\begin{document}
\title[Positivity]{On positivity of orthogonal series and its applications
in probability}
\author{Pawe\l\ J. Szab\l owski}
\address{Emeritus in Department of Mathematics and Information Sciences,
Warsaw University of Technology ul Koszykowa 75, 00-662 Warsaw, Poland}
\email{pawel.szablowski@gmail.com}
\urladdr{}
\keywords{orthogonal series, orthogonal polynomials, Lancaster bivariate
distributions, moment sequences, absolute continuity of measures.}
\subjclass[2000]{Primary 33C45, 42C10, 60E99; Secondary 60E05, 60J35}

\begin{abstract}
We give necessary and sufficient conditions for an orthogonal series to
converge in the mean-squares to a nonnegative function. We present many
examples and applications, in analysis and probability. In particular, we
give necessary and sufficient conditions for a Lancaster-type of expansion $%
\sum_{n\geq 0}c_{n}\alpha _{n}(x)\beta _{n}(y)$ with two sets of orthogonal
polynomials $\left\{ \alpha _{n}\right\} $ and $\left\{ \beta _{n}\right\} $
to converge in means-squares to a nonnegative bivariate function. In
particular, we study the properties of the set $C(\alpha ,\beta )$ of the
sequences $\left\{ c_{n}\right\} ,$ for which the above-mentioned series
converge to a nonnegative function and give conditions for the membership to
it. Further, we show that the class of bivariate distributions for which a
Lancaster type expansion can be found, is the same as the class of
distributions having all conditional moments in the form of polynomials in
the conditioning random variable.
\end{abstract}

\maketitle
\thanks{The author is very grateful to the unknown referee for suggestions
improving the readability of the paper and also for pointing out numerous
misprints.}

\section{Introduction}

\subsection{Notation, terminology and basic settings}

First, let us fix notation that mostly comes from the measure theory. All
signed measures considered in the paper will be $\sigma -$finite,
consequently, the Radon-Nikodym theorem can be applied. If $\chi $ is a
signed measure and $\chi \allowbreak =\allowbreak \chi ^{+}\allowbreak
-\allowbreak \chi ^{-}$ is its Hahn-Jordan decomposition then $|\chi
|\allowbreak =\allowbreak \chi ^{+}\allowbreak +\allowbreak \chi ^{-}$ is a
measure. Obviously, a signed measure $\chi $ is a measure if $\chi
^{-}\allowbreak =\allowbreak 0$. We will use the notation $\int f(x)d\mu (x)$
interchangeably with $\int f(x)\mu (dx)$ or even $\int fd\mu $ if the set of
integration is evident, to denote integral with respect to the (possibly
signed) measure $\mu $. Sometimes $d\mu (.)$ will denote measure $\mu $
itself.

Let $L_{2}(\limfunc{supp}(\mu ),\mu )$ denote the set of all functions $f:%
\mathbb{R}^{m}\longrightarrow \mathbb{R}$ that are square-integrable with
respect to the measure $\left\vert \mu \right\vert $. Let us also agree that
since all functions from the set $L_{2}(\limfunc{supp}(\mu ),\mu )$ are
defined only on $\limfunc{supp}(\mu )$ ($\mu \allowbreak -\allowbreak a.s.$
in fact), hence we will use notation $L_{2}(\mu )$ instead $L_{2}(\limfunc{%
supp}(\mu ),\mu )$.

In the sequel, we will be interested only in signed measures that have
one-dimensional marginal measures that are identified by their moments (for
the definition and basic properties see the Appendix below). Following \cite%
{Chih79} or \cite{Sim98} this is assured for those one-dimensional measures $%
\mu $ that they satisfy the so-called Cramer's condition\footnote{%
The names of this condition as well as Hardy's condition, below, were
recalled by Prof. Jordan Stoyanov in a private letter.}, that is that there
exists $\delta >0$ such that 
\begin{equation}
\int \exp (\delta \left\vert x\right\vert )d\left\vert \mu \right\vert
(x)<\infty .  \label{IM}
\end{equation}%
In fact condition (\ref{IM}) can have a weaker form (i.e. the so-called
Hardy's condition if the measure $\mu $ has support contained in $\left\{
x:x\geq 0\right\} $. But we will not go into these details.

Let us denote by $Cra$ the set of all signed measures $\chi $ on $\mathbb{R}$
such that satisfy the condition (\ref{IM}) for some positive number $\delta $%
. Notice that $Cra$ contains all measures with bounded supports.

Further, let us introduce the following set of signed measures $AC2(\mu ),$
generated by a measure $\mu :$%
\begin{equation*}
AC2(\mu )=\left\{ fd\mu :f\in L_{2}(\mu )\right\} .
\end{equation*}

In other words, the set $AC2(\mu )$ contains all signed measures $\nu $ that
are absolutely continuous with respect to $\mu $ with their Radon-Nikodym
derivative $\frac{d\nu }{d\mu }$ (i.e. function $f$) being square integrable
with respect to the measure $\mu .$ Note, that in the definition of the set $%
AC2(\mu )$, $\mu $ can be a multidimensional $\sigma -$finite measure.

We have the following simple observation:

\begin{proposition}
If a one-dimensional measure  $\chi $ belongs to the set $Cra,$ then $%
AC2\left( \chi \right) \subset Cra$.
\end{proposition}

\begin{proof}
Let $f\in L^{2}(\chi )$, then by by the Cauchy-Schwarz inequality we have: 
\begin{equation*}
(\int_{\limfunc{supp}\mu }\left\vert f(x)\right\vert \exp (\delta
|x|/2)d\left\vert \chi \right\vert (x))^{2}\leq \int_{\limfunc{supp}\mu
}\left\vert f\right\vert ^{2}d\left\vert \chi \right\vert \int_{\limfunc{supp%
}\mu }\exp (\delta \left\vert x\right\vert )d\left\vert \chi \right\vert
(x)<\infty .
\end{equation*}%
Consequently, $fd\chi $ satisfies condition (\ref{IM}), that is, it belongs
to the set $Cra$.
\end{proof}

In other words, if a signed measure $\mu $ is identifiable by moments, then
every element of $AC2\left( \mu \right) $ is identifiable by moments.

Let $\mu $ be a measure from the set $Cra,$ by $AC2^{+}(\mu )$ let us denote
the subset of $AC2(\mu )$ that contains only measures, i.e. $%
\int_{A}f(x)d\mu (x)\geq 0$ for any $\mu -$ measurable set $A\subset \mathbb{%
R}$. Notice that $AC2^{+}(\mu )$ is in fact a closed cone in $AC2(\mu )$.

Now if $\mu $ is a measure, $\left\{ p_{n}\right\} _{n\geq 0}$ the set of
polynomials that are orthogonal with respect to the measure $\mu .$ Let us
also define numbers $\hat{p}_{n}$ by the following orthogonality relationship%
\begin{equation}
\hat{p}_{n}\allowbreak \delta _{mn}=\allowbreak \int_{\limfunc{supp}\mu
}p_{n}(x)p_{m}(x)d\mu (x),  \label{sqp}
\end{equation}
with $\delta _{nm}$ denoting traditionally, the Kronecker's delta. Let us
agree that in the sequel the "hat" over the symbol of a polynomial will
denote the positive number defined by (\ref{sqp}). Further, let $\left\{
c_{n}\right\} _{n\geq 0}$ be the sequence of reals. The infinite series of
the form 
\begin{equation}
\sum_{n\geq 0}c_{n}p_{n}(x),  \label{Ors}
\end{equation}%
is called an orthogonal series. It is known (see, e.g. \cite{Alexits61}),
that if the following condition 
\begin{equation}
\sum_{n\geq 0}c_{n}^{2}\hat{p}_{n}<\infty   \label{ms}
\end{equation}%
is satisfied, then the series (\ref{Ors}) converges in $L_{2}(\mu )$.

\begin{remark}
Let us recall, that basically, all series considered will converge in the
mean-square sense for some specified measure. However, let us recall that
due to the Rademacher-Men'shov theorem (see, e.g. \cite{Alexits61}),
assuming sometimes only little stronger conditions, one can obtain the
convergence almost everywhere with respect to the specified measure. More
precisely, the Rademacher-Meshov theorem states that if the following
condition is satisfied%
\begin{equation}
\sum_{n\geq 0}c_{n}^{2}\hat{p}_{n}\log ^{2}(n+1)<\infty ,  \label{RM}
\end{equation}%
then the series (\ref{Ors}) converges in the means-squares and almost
everywhere $\func{mod}\mu .$
\end{remark}

\subsection{The problem}

The main idea of the paper is to present necessary and sufficient conditions
for the positivity of the sum of the orthogonal series (\ref{Ors}) for
almost all $x$ belonging to the closed subset $\mathcal{M}$ of the support
of the positive measure $\mu $ with respect to which polynomials $\left\{
p_{n}\right\} $ are orthogonal. The necessary part of the theorem has been
presented in 2011 in \cite{Szablowski2010(1)}. Later over the years, slight
generalizations of the original formulation and many examples were presented
in \cite{SzabChol}, \cite{SzablKer}.

However, only recently I have realized, that the necessary conditions for
the coefficients $c_{n}$ to assure positivity of (\ref{Ors}), are also
sufficient.

The paper is organized as follows. In the next Section \ref{gl}, we present
our main result together with its simple proof. We also quote papers where
many examples illustrating the assertions of the theorem are presented. The
last Section \ref{prob}, presents applications of our result to probability
theory in particular to the so-called Lancaster expansions. There is also an
appendix in which we recall basic facts about the moments and the moments'
problem.

\section{General results\label{gl}}

Our main result is the following :

\begin{theorem}
\label{main}Let a measure $\mu \in Cra$, $\left\{ p_{n}\right\} $ be the
sequence of polynomials orthogonal with respect to the measure $\mu $. Let
us consider the orthogonal series (\ref{Ors}) and by $f$ let us denote the
mean-square sum of it. Let $\mathcal{M}$ be some closed subset of the $%
\limfunc{supp}(\mu ).$

The following two conditions are equivalent:

a) $f\left( x\right) \geq 0$ $\mu $ - a.s. on $\mathcal{M}$,

b) There exists $\nu \in AC2^{+}(\mu )$, with $\limfunc{supp}(\nu
)\allowbreak =\allowbreak \mathcal{M}$ such that $f(x)\allowbreak
=\allowbreak \frac{d\nu }{d\mu }(x)$.

If one of the conditions a) and b) is satisfied, then the coefficients $%
c_{n} $ are given by the following formula: 
\begin{equation}
c_{n}\allowbreak =\allowbreak (\int_{\mathcal{M}}p_{n}(x)d\nu (x))/\hat{p}%
_{n}.  \label{cn}
\end{equation}
\end{theorem}

\begin{remark}
Notice, that, if additionally, condition (\ref{RM}) is satisfied by the
coefficients $\left\{ c_{n}\right\} $, then the convergence to $f$ is not
only in mean-square but also almost surely for almost all $x\in \mathcal{M}$ 
$\func{mod}\mu $.
\end{remark}

\begin{remark}
Notice, that, if $\left\{ r_{n}\right\} $ denotes the sequence of
polynomials orthogonal with respect to the measure $\nu $, then the number $%
\int_{\limfunc{supp}\mu }p_{n}(x)d\nu (x)$ is equal to the free coefficient
in the connection coefficient expansion of $p_{n}(x)$ in terms of $\left\{
r_{n}\right\} .$ More precisely considering connection coefficient expansions%
\begin{equation}
p_{n}(x)=\sum_{j=0}^{n}\gamma _{n,j}r_{j}(x),  \label{pnar}
\end{equation}%
we have 
\begin{equation*}
\int_{\limfunc{supp}\mu }p_{n}(x)d\nu (x)=\gamma _{n,0}.
\end{equation*}%
Consequently, the assertion of the Theorem \ref{main} can be rephrased in
the following way.

An orthogonal series (\ref{Ors}) with coefficients satisfying condition (\ref%
{ms}), is nonnegative for almost all (mod $\mu )$ $x\in \limfunc{supp}\mu $
if and only if another sequence $\left\{ r_{n}\right\} $ of orthogonal
polynomials can be found such that considering connection coefficient
expansion of $p_{n}(x)$ in terms of $\left\{ r_{n}\right\} $ given by (\ref%
{pnar}) we have:%
\begin{equation}
c_{n}=\gamma _{n,0}/\hat{p}_{n}.  \label{cgp}
\end{equation}
\end{remark}

\begin{proof}[Proof of Theorem \protect\ref{main}]
b) $\Rightarrow $ a). First, let us assume that the coefficients $c_{n}$ are
given by (\ref{cn}) and let us denote by $f(x)$ the sum of (\ref{Ors}). By
assumptions, we know that it exists and it is square-integrable with respect
to $\mu .$ We have 
\begin{equation*}
\int_{\mathcal{M}}p_{n}(x)f\left( x\right) d\mu (x)\allowbreak =\allowbreak
c_{n}\allowbreak \hat{p}_{n}=\int_{\mathcal{M}}p_{n}(x)d\nu (x).\allowbreak 
\end{equation*}%
Knowing numbers $c_{n}\hat{p}_{n}$, $n\allowbreak =\allowbreak 0,1,2\ldots $
and the form of polynomials $\left\{ p_{n}(x)\right\} .$ we can find numbers 
$\left\{ \int_{\mathcal{M}}x^{n}d\nu (x)\right\} _{n\geq 0}$ and $\left\{
\int_{\mathcal{M}}x^{n}f(x)d\mu (x)\right\} $. We see that they are
identical and the two measures are, by assumption, identifiable by moments
so the two measures must be identical i.e. 
\begin{equation*}
f\left( x\right) d\mu (x)=d\nu (x).
\end{equation*}%
But $\nu $ was chosen to be nonnegative. So $f(x)\geq 0$ on the $\mathcal{M}$
mod $\mu $. Besides, we see that 
\begin{equation*}
f(x)=\frac{d\nu }{d\mu }(x).
\end{equation*}

a) $\Rightarrow $ b). Now, let us assume, that we want to find an expansion
of the Radon-Nikodym derivative of two nonnegative measures $\nu <<\mu $
that is additionally square-integrable (mod $\mu )$ in an infinite
orthogonal series. That is, we are looking for the coefficients of the
expansion of the form of (\ref{Ors}). Then, following our assumptions, we
have%
\begin{eqnarray*}
d\nu (x) &=&f(x)d\mu (x), \\
p_{n}(x) &=&\sum_{j=0}^{n}\gamma _{n,j}r_{j}(x),
\end{eqnarray*}%
where $\left\{ r_{n}(x)\right\} $ are polynomials orthogonal with respect to 
$\nu $. These polynomials exist since for every positive measure satisfying
condition (\ref{IM}) one can define such polynomials. Naturally, having two
sets of orthogonal polynomials one has a set of connection coefficients
between them. Since $f$ is square-integrable with respect to $d\mu (x)$ we
know that the coefficients $c_{n}$ are defined uniquely. Besides we have%
\begin{equation*}
\int_{\mathcal{M}}f(x)p_{n}(x)d\mu \left( x\right) =c_{n}\hat{p}_{n}=\int_{%
\mathcal{M}}p_{n}(x)d\nu (x)\allowbreak =\allowbreak \gamma _{n,0}.
\end{equation*}
\end{proof}

There are numerous examples of expansions of the type (\ref{Ors}). They
appeared over the years in \cite{Szablowski2010(1)} (Section 5 concerning
mostly polynomials from the so-called Askey-Wilson scheme) or recently in 
\cite{Szab2020}, as well as in \cite{SzablKer}, \cite{Szab-bAW}, \cite%
{Szab13} ( concerning. among others, Charlier (3.7) or Jacobi (3.6)
polynomials).

\begin{remark}
\label{moment}Notice also that coefficient $\gamma _{n,0}$ is equal to 
\begin{equation}
\gamma _{n,0}=\sum_{j=0}^{n}\pi _{n,j}m_{j},  \label{gam1}
\end{equation}%
where the coefficients $\left\{ \pi _{n.j}\right\} $ are defined by the
expansion 
\begin{equation*}
p_{n}(x)=\sum_{j=0}^{n}\pi _{n,j}x^{j},
\end{equation*}%
while the numbers $\left\{ m_{j}\right\} $ form a moment sequence of some
distribution absolutely continuous with respect to the measure $\mu .$ This
observation can be derived directly from (\ref{cn}) or from the formula
given by Lemma 1 of \cite{SzabChol}).

This observation leads also to the following method of checking if a given
sequence $\left\{ c_{n}\right\} $ applied in the series (\ref{Ors}) can
result in the series' positive-sum. Namely, considering formulae (\ref{gam1}%
) and (\ref{cgp}) we can find a sequence $\left\{ m_{n}\right\} $ by
recursively solving a sequence of equations:%
\begin{equation*}
m_{n}=\frac{1}{\pi _{n,n}}(\hat{p}_{n}c_{n}-\sum_{j=0}^{n-1}\pi _{n,j}m_{j}),
\end{equation*}%
for $n\geq 0$. Since $\left\{ m_{n}\right\} $ has to be a moment sequence,
we can apply one of the known criteria some of which are presented in the
Appendix.
\end{remark}

\begin{remark}
Continuing the previous remark, the assertion of the theorem (in case when $%
\mathcal{M\allowbreak =}\limfunc{supp}\mu $ can be expressed in the
following way.

There exists a linear map: $K:$ $L^{2}(\mu )\longrightarrow L^{2}(\mu )$
that can be symbolically expressed by the following formula:%
\begin{equation*}
K(f)(x)=\allowbreak \int_{\limfunc{supp}\mu }(\sum_{i\geq 0}p_{i}(x)p_{i}(y)/%
\hat{p}_{n})f(y)d\mu (y),
\end{equation*}%
that maps every function $f\in L^{2}(\mu )$ on itself, since, as it is
easily seen, we have: 
\begin{equation*}
\int_{\limfunc{supp}\mu }(\sum_{i\geq 0}p_{i}(x)p_{i}(y)/\hat{p}%
_{n})(\sum_{n\geq 0}c_{n}p_{n}(y))d\mu (y)=\allowbreak \sum_{n\geq
0}c_{n}p_{n}(x).
\end{equation*}
\end{remark}

\section{Probabilistic aspects\label{prob}}

In this section to avoid confusion, we will assume that all considered
measures will be probabilistic that is they will integrate up to $1.$
Further, we will consider bivariate distributions $dF(x,y)$ with marginal
distributions $d\mu (x)$ and $d\nu (y)$ (i.e. $d\mu (x)\allowbreak
=\allowbreak \int F(x,dy)$ and similarly for $d\nu $). Naturally, we will
assume, that both marginal measures belong to the set $Cra$ in order to be
identified by their moments. Moreover, we will consider only such bivariate
distributions $F$ satisfying the following condition: 
\begin{equation}
\int_{\limfunc{supp}\mu }\int_{\limfunc{supp}\nu }(\frac{\partial ^{2}F}{%
\partial \mu \partial \nu }(x,y))^{2}d\mu (x)d\nu (y)<\infty ,  \label{L2}
\end{equation}%
where $\frac{\partial ^{2}F}{\partial \mu \partial \nu }(x,y)$, denotes
Radon-Nikodym derivative of the measure $F$ with respect to the product
measure $\mu \times v.$ That is, in other words, that $dF\in AC2(d\mu \times
d\nu ),$ where $d\mu \times d\nu $ denotes the product measure of $d\mu $
and $d\nu .$

Let us denote by $\left\{ \alpha _{n}(x)\right\} $ and $\left\{ \beta
_{n}(y)\right\} $ two sets of polynomials orthogonal with respect to the
measures respectively $d\mu (x)$ and $d\nu (y).$ Now, for all distributions
satisfying (\ref{L2}) the following expansion is valid:%
\begin{equation}
dF(x,y)=d\mu (x)d\nu (y)\sum_{i,j=0}\lambda _{i,j}\alpha _{i}(x)\beta
_{j}(y),  \label{exL2}
\end{equation}%
with $\sum_{i,j\geq 0}\lambda _{i,j}^{2}\hat{\alpha}_{i}\hat{\beta}%
_{j}<\infty .$ Conditional distributions $\zeta (dx|y)$ and $\xi (dy|x)$ are
defined respectively, for almost all $y$ ($\func{mod}v$) and almost all $x$ (%
$\func{mod}\mu )$ by the following relationships:%
\begin{equation}
F(dx,dy)=\zeta (dx|y)v(dy)=\xi (dy|x)\mu (dx).  \label{pr1}
\end{equation}%
One shows that both these distributions do exist and are defined uniquely
respectively $\func{mod}v$ and $\func{mod}\mu $.

Notice that making use of the definition of marginal distribution and the
orthogonal polynomials and changing, if necessary, the order of integration
that we have:%
\begin{equation*}
\int_{\limfunc{supp}\mu }\int_{\limfunc{supp}v}\beta _{n}(y)F(dx,dy)=\int_{%
\limfunc{supp}v}\beta _{n}(x)v(dx)=0,
\end{equation*}%
for all $n\geq 1$ and likewise for polynomials $\left\{ \alpha _{n}\right\} $%
.

Now applying the above-mentioned definitions and properties to the expansion
(\ref{exL2}) we deduce that 
\begin{equation}
\int \int_{\limfunc{supp}\mu \times \limfunc{supp}v}dF(x,y)\allowbreak
=\allowbreak \lambda _{0,0}\allowbreak =\allowbreak 1,~~\forall n\geq
1:\lambda _{0,n}=\lambda _{n,0}=0,  \label{pr3}
\end{equation}%
and also, that:%
\begin{equation}
\zeta (dx|y)=(\sum_{i,j=0}\lambda _{ij}\alpha _{i}(x)\beta _{j}(y))\mu
(dx),~\xi (dy|x)=(\sum_{i,j=0}\lambda _{ij}\alpha _{i}(x)\beta _{j}(y))v(dy).
\label{cond}
\end{equation}

We can now rephrase the above-mentioned Theorem \ref{main} in the form that
is important for the probabilists.

\begin{theorem}
\label{EXPA}Let $\mu \in Cra$, and let $\left\{ \alpha _{n}\right\} $ be a
set of polynomials orthogonal with respect to $\mu $. Then, the orthogonal
series:%
\begin{equation}
g(x,y)=\sum_{n\geq 0}h_{n}(y)\alpha _{n}(x)/\hat{\alpha}_{n},  \label{bv}
\end{equation}%
where, as above, $\hat{\alpha}_{n}$ is defined by (\ref{sqp}) and such that 
\begin{equation}
\sum_{m\geq 0}\left\vert h_{n}(y)\right\vert ^{2}/\hat{\alpha}_{n}<\infty ,
\label{h2}
\end{equation}%
for all $y$ belonging to some closed set $\limfunc{supp}\nu $, converges in
mean square (mod $\mu )$ to a nonnegative function iff there exists a family
of probability measures $\zeta (.|y)$ indexed by $y,$ such that for all $%
y\in \limfunc{supp}\nu ,$ $\zeta (.|y)<<\mu $ and $\forall n\geq 0:$ 
\begin{equation*}
h_{n}(y)\allowbreak =\allowbreak \int_{\limfunc{supp}\mu }\alpha
_{n}(x)\zeta (dx,y).
\end{equation*}%
Moreover 
\begin{equation*}
d\zeta (x|y)=g(x,y)d\mu (x).
\end{equation*}%
If additionally there exists a probability measure $\nu $ such that for $%
\forall n\geq 0:$ 
\begin{equation*}
\int_{\limfunc{supp}\nu }h_{n}(y)d\nu (y)\allowbreak =\allowbreak \delta
_{n,0},
\end{equation*}%
then one can define a bivariate measure $F<<\mu \times \nu $ by the formula%
\begin{equation}
F(dx,dy)=d\zeta (x|y)d\nu (y)=g(x,y)d\mu (x)d\nu (y)  \label{suma}
\end{equation}%
and for which 
\begin{equation*}
\mu (A)=\int_{\limfunc{supp}\nu }\zeta (A|y)d\nu (y),
\end{equation*}%
for all Borel subsets $A$ of $\limfunc{supp}(\mu )$ almost everywhere.
\end{theorem}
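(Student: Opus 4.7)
The plan is to apply Theorem \ref{main} slicewise in $y$, treating $y\in\limfunc{supp}\nu$ as a parameter. For each such $y$, the hypothesis (\ref{h2}) is exactly the Parseval-type condition $\sum_{n}c_{n}(y)^{2}\hat{\alpha}_{n}<\infty$ with $c_{n}(y)=h_{n}(y)/\hat{\alpha}_{n}$, so by the Riesz--Fischer theorem the series (\ref{bv}) converges in $L^{2}(\limfunc{supp}\mu,\mu)$ to some function $f(\cdot,y)$. Theorem \ref{main} then yields the equivalence: $f(\cdot,y)\geq 0$ $\func{mod}\mu$ iff $f(\cdot,y)=d\zeta(\cdot|y)/d\mu$ for some nonnegative measure $\zeta(\cdot|y)\ll\mu$, in which case the Fourier coefficients are recovered as $h_{n}(y)=\int_{\limfunc{supp}\mu}\alpha_{n}(x)\,d\zeta(x|y)$. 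With the usual conventions $\alpha_{0}\equiv 1$ and $\hat{\alpha}_{0}=\mu(\limfunc{supp}\mu)$, the $n=0$ identity reads $h_{0}(y)=\zeta(\limfunc{supp}\mu|y)$, so $\zeta(\cdot|y)$ is a probability measure precisely when $h_{0}(y)\equiv 1$; this delivers the first half of the theorem and the representation $d\zeta(x|y)=f(x,y)d\mu(x)$.

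For the second (marginal) assertion, assume the additional probability measure $\nu$ satisfies $\int h_{n}(y)\,d\nu(y)=\delta_{n,0}$, and define $F$ by (\ref{suma}). To verify that the $x$-marginal of $F$ is $\mu$, I would test against the orthogonal basis. Using $\int_{\limfunc{supp}\mu}\alpha_{n}(x)f(x,y)\,d\mu(x)=h_{n}(y)$ (which is how the coefficients of (\ref{bv}) are defined) together with Fubini, one obtains
\[
\int_{\limfunc{supp}\mu}\alpha_{n}(x)\left(\int_{\limfunc{supp}\nu}f(x,y)\,d\nu(y)\right)d\mu(x)\;=\;\int_{\limfunc{supp}\nu}h_{n}(y)\,d\nu(y)\;=\;\delta_{n,0}.
\]
These identities determine all $\{\alpha_{n}\}$-moments of the measure $\bigl(\int f(x,y)d\nu(y)\bigr)d\mu(x)$, and these coincide with the $\{\alpha_{n}\}$-moments of $\mu$ itself. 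Moment-identifiability of $\mu$, applied exactly as in the proof of Theorem \ref{main}, then forces equality of the two measures, which is precisely the marginal identity $\mu(A)=\int\zeta(A|y)d\nu(y)$. Nonnegativity of $F$ and the absolute continuity $F\ll\mu\times\nu$ are immediate from the same properties of $f$.

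The principal obstacle is justifying the Fubini interchange in the displayed equation, since slicewise we only know $f(\cdot,y)\in L^{2}(\mu)$. I would circumvent this by invoking Tonelli on the nonnegative integrand $f$ together with the normalization $\int\!\!\int f\,d\mu\,d\nu=1$ (obtained by testing with $\alpha_{0}$ and using $\int h_{0}(y)d\nu(y)=1$), which gives joint integrability of $f$ against $\mu\times\nu$. The products $\alpha_{n}(x)f(x,y)$ are then handled either by decomposing $\alpha_{n}$ into its positive and negative parts, or --- more cleanly --- by expanding $\alpha_{n}$ in the monomial basis and applying moment-identifiability monomial by monomial, exactly in the spirit of Theorem \ref{main}. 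Beyond (\ref{h2}) and the hypotheses on $\nu$, no additional integrability structure is required.
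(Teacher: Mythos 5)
Your proposal follows essentially the same route as the paper: fix $y$, apply Theorem \ref{main} slicewise to obtain the family $\zeta(\cdot|y)$ with $h_{n}(y)=\int \alpha _{n}\,d\zeta (\cdot |y)$, and then integrate over $y$ using $\int h_{n}\,d\nu =\delta _{n,0}$ to recover the marginal identity $\mu (A)=\int \zeta (A|y)\,d\nu (y)$. You are in fact somewhat more careful than the paper, which simply asserts $\int g(x,y)\,d\nu (y)=1$ without discussing the sum--integral (Fubini) interchange that you explicitly isolate and attempt to justify via Tonelli and the normalization $h_{0}\equiv 1$.
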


\begin{proof}
Suppose, that $F$ satisfies (\ref{L2}). Let $\mu $ and $\nu $ denote its
marginal measures and let the sets $\left\{ \alpha _{n}\right\} $ and $%
\left\{ \beta _{n}\right\} $ denote sets of polynomials orthogonal with
respect to measures $\mu $ and $\nu $ respectively. Let the conditional
distributions be defined by (\ref{cond}). Notice that by (\ref{pr3}) and (%
\ref{cond}) we have $\int_{\limfunc{supp}\mu }\zeta (dx,y)=1$ and similarly
for the $\xi (dy,x)$. Now, changing the order of summation and denoting by 
\begin{equation*}
h_{n}(y)\allowbreak =\allowbreak \hat{\alpha}_{n}\sum_{j=0}^{\infty }\lambda
_{nj}\beta _{j}(y),
\end{equation*}%
we have 
\begin{equation*}
dF(x,y)=d\mu (x)d\nu (y)\sum_{n\geq 0}h_{n}(y)\alpha _{n}(x)/\hat{\alpha}%
_{n}.
\end{equation*}%
Further, utilizing (\ref{pr1}) and (\ref{pr3}) we have: 
\begin{gather*}
\zeta (dx|y)=(\sum_{n\geq 0}h_{n}(y)\alpha _{n}(x)/\hat{\alpha}_{n})d\mu (x),
\\
\int_{\limfunc{supp}v}h_{n}(y)v(dy)\allowbreak =\allowbreak \delta _{n,0}.
\end{gather*}%
with $\delta _{n,m}$ denoting traditionally Kronecker's delta. By
assumptions concerning polynomials $\left\{ \alpha _{n}\right\} $ and by
Theorem \ref{main} we see that for all $y\in \limfunc{supp}\nu $, we have: 
\begin{gather*}
h_{n}(y)\allowbreak =\allowbreak \int_{\limfunc{supp}\mu }\alpha
_{n}(x)d\zeta (x|y),\sum_{j\geq 0}\int_{\limfunc{supp}v}\left\vert
h_{j}(y)\right\vert ^{2}v(dy)/\hat{\alpha}_{j}<\infty , \\
\text{consequently }\sum_{j\geq 0}\left\vert h_{j}(y)\right\vert ^{2}/\hat{%
\alpha}_{j}<\infty ,
\end{gather*}%
$\func{mod}v$.

Now let us assume the converse statement, i.e. that we have the converging
to a nonnegative function in mean-square series (\ref{bv}) with polynomials $%
\left\{ \alpha _{n}\right\} $ and the measure $\mu ,$ as described in the
assumptions, together with the condition (\ref{h2}) satisfied for almost
every $y$ belonging to some closed set that we will denote by $\limfunc{supp}%
\nu $. By Theorem \ref{main}, we deduce that if the series (\ref{bv})
converges to a nonnegative function, then there exists a family $d\zeta
(x|y) $ of positive measures absolutely continuous with respect to $\mu $
such that $\forall n\geq 0:$%
\begin{equation*}
h_{n}(y)=\int_{\limfunc{supp}\mu }\alpha _{n}(x)\zeta (dx|y).
\end{equation*}%
Moreover, we have 
\begin{equation*}
\zeta (dx|y)=g(x,y)d\mu (x).
\end{equation*}%
Now, if there exists a probability measure $\nu $ such that $\forall n\geq
0:\int_{\limfunc{supp}\nu }h_{n}(y)v(dy)\allowbreak =\allowbreak \delta
_{n,0}$, then we 
\begin{equation*}
\int_{\limfunc{supp}\nu }g(x,y)d\nu (y)\allowbreak =\allowbreak 1,
\end{equation*}%
hence 
\begin{equation*}
\int_{\limfunc{supp}\nu }\zeta (dx|y)d\nu (y)\allowbreak =\allowbreak d\mu
(x),
\end{equation*}%
as claimed.
\end{proof}

The rest of this section will be dedicated to the so-called Lancaster
expansions. In particular, we will be able to give now necessary and
sufficient conditions for these types of expansions. Let us recall that
Lancaster, in the series of papers \cite{Lancaster58}, \cite{Lancaster63(1)}%
, \cite{Lancaster63(2)}, \cite{Lancaster75}, considered and developed the
following question: given a bivariate distribution say $dF(x,y)$, its two
marginal distributions say $d\mu (x)$ and $d\nu (y)$ and the two sets of
polynomials, when is it possible to find the set of numbers $\left\{
c_{n}\right\} $ such that 
\begin{equation}
dF(x,y)=d\mu (x)d\nu (y)\sum_{n\geq 0}c_{n}\alpha _{n}(x)\beta _{n}(y)
\label{La}
\end{equation}%
almost everywhere in $\limfunc{supp}(\mu )\times \limfunc{supp}(\nu )$ with
respect to the product measure. In fact, Lancaster in his papers and also
his followers in their papers confined the problem to such bivariate
distributions $dF$ satisfying condition (\ref{L2}).

\begin{definition}
A class of bivariate distributions with margins identifiable by moments,
satisfying (\ref{L2}) and having expansion (\ref{La}) will be called
Lancaster class (of bivariate distributions), briefly (LC distributions).
\end{definition}

\begin{remark}
\label{PC}Notice, that if $F$ is of LC distribution, then we have:%
\begin{eqnarray*}
\int_{\limfunc{supp}\mu }\alpha _{n}(x)dF(dx,y) &=&c_{n}\beta _{n}(y)d\nu
(y), \\
\int_{\limfunc{supp}\nu }\beta _{n}(x)dF(x,dy) &=&c_{n}\alpha _{n}(x)d\mu
(x).
\end{eqnarray*}%
In other words, in terms used in probability, we can easily deduce that $%
\forall n\geq 1$ the conditional moments, i.e.: 
\begin{eqnarray*}
E(X^{n}|Y) &=&p_{n}(Y), \\
E(Y^{n}|X) &=&q_{n}(X),
\end{eqnarray*}%
respectively $\func{mod}(\nu )$ and $\func{mod}(\mu )$, where $p_{n}$ and $%
q_{n}$, are some polynomials of the full order\footnote{$p_{n}(x)$ is of
full order $n$ iff coefficient by $x^{n}$ is nonzero} $n.$
\end{remark}

\begin{definition}
Class of bivariate distributions with margins identifiable by moments,
having the property that all its conditional moments of the order, say, $n$
are polynomials of the full order $n$ will be called polynomial class (of
distributions) briefly PC distributions.
\end{definition}

As a corollary we have the following characterization of the Lancaster class
of distributions.

\begin{theorem}
Let us consider a bivariate distribution $F$ satisfying (\ref{L2}) with
margins identifiable by moments. Then $F$ is an LC distribution iff it is a
PC distribution.
\end{theorem}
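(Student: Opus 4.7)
My plan is to prove the two implications separately. The forward direction is essentially a restatement of Remark~\ref{PC}: if $F$ is LC, then that remark already shows that $E(X^n\mid Y)$ and $E(Y^n\mid X)$ are polynomials of full order $n$, which is precisely the defining property of a PC distribution. So the real work lies in the reverse implication.

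For ($\Leftarrow$), I would start from condition (\ref{L2}): the Radon--Nikodym derivative $f(x,y)=\frac{\partial^{2}F}{\partial\mu\,\partial\nu}(x,y)$ lies in $L^{2}(\mu\times\nu)$. Since both marginals are identifiable by moments, polynomials are dense in $L^{2}(\mu)$ and $L^{2}(\nu)$, so the tensor system $\{\alpha_{i}(x)\beta_{j}(y)\}_{i,j\geq 0}$ is a complete orthogonal basis of $L^{2}(\mu\times\nu)$. This yields an $L^{2}$-expansion $f(x,y)=\sum_{i,j\geq 0}\lambda_{ij}\alpha_{i}(x)\beta_{j}(y)$. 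Integrating against $\alpha_{n}(x)\,d\mu(x)$, and using that the $Y$-marginal is $\nu$ (so $\int f(x,y)\,d\mu(x)=1$ a.s.\ $\nu$), I can identify
$$h_{n}(y):=\hat{\alpha}_{n}\sum_{j\geq 0}\lambda_{nj}\beta_{j}(y)=\int_{\limfunc{supp}\mu}\alpha_{n}(x)f(x,y)\,d\mu(x)=E\bigl[\alpha_{n}(X)\mid Y=y\bigr].$$
By PC, $E[\alpha_{n}(X)\mid Y=y]$ is a linear combination of $E[X^{k}\mid Y=y]$ with $k\leq n$, hence a polynomial in $y$ of degree at most $n$. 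Consequently its $\beta_{j}$-expansion terminates at $j=n$, forcing $\lambda_{nj}=0$ for all $j>n$.

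The main (though not deep) obstacle is then to obtain the symmetric constraint needed to \emph{diagonalize} the coefficient matrix; triangularity from one side alone is not enough. Exchanging the roles of $X$ and $Y$, the parallel computation $\int\beta_{m}(y)f(x,y)\,d\nu(y)=E[\beta_{m}(Y)\mid X=x]$ is a polynomial of degree at most $m$ in $x$ by PC, which forces $\lambda_{im}=0$ whenever $i>m$. Combining both bounds yields $\lambda_{ij}=0$ for $i\neq j$, and the double series collapses to
$$f(x,y)=\sum_{n\geq 0}c_{n}\alpha_{n}(x)\beta_{n}(y),\qquad c_{n}:=\lambda_{nn},$$
which is exactly the Lancaster expansion (\ref{La}). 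Thus $F$ is LC, and invoking Theorem~\ref{EXPA} (or Theorem~\ref{main}) one recovers also the nonnegativity/probability-measure structure already assumed on $F$. This completes the equivalence.
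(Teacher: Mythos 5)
Your proposal is correct and follows essentially the same route as the paper: both directions hinge on the same two-sided triangularity argument, where $E[\alpha_n(X)\mid Y]$ having degree $\le n$ kills the coefficients $\lambda_{nj}$ with $j>n$ and the symmetric condition on $E[\beta_m(Y)\mid X]$ kills those with $i>m$, forcing the coefficient matrix to be diagonal. The paper merely packages the first step through Theorem~\ref{EXPA} and the connection coefficients $\gamma_{n,j}=\hat{\alpha}_n\lambda_{nj}$ before invoking uniqueness of the expansion, which is the same computation you carry out directly in the tensor basis.
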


\begin{proof}
The fact that every distribution of the Lancaster class belongs also to the
PC class was noted in Remark \ref{PC}. So now, let us assume that $F$
belongs to the PC class. By Theorem \ref{EXPA} we know that it can be
expanded in the series (\ref{bv}). Now we see that $\forall n\geq
1:h_{n}(y)\allowbreak =\int \alpha _{n}(x)\zeta (dx,y)\allowbreak
=\allowbreak E((\alpha _{n}(X)|Y=y).$ But, by our assumption, $h_{n}(y)$ has
to be a polynomial of the full order $n$ i.e. 
\begin{equation*}
h_{n}(y)=\sum_{j=0}^{n}\gamma _{n,j}\beta _{j}(y),
\end{equation*}%
where $\left\{ \beta _{n}\right\} $ are the polynomials orthogonal with
respect to the marginal measure $\nu .$ Hence we musta have $\gamma _{n,j}=0$
for $j>n$. Now changing the order of summation in (\ref{bv}) we get:%
\begin{equation*}
F(dx,dy)=d\mu (x)d\nu (y)\sum_{j=0}^{\infty }\beta _{j}(y)\sum_{n\geq
j}\gamma _{n,j}\alpha _{n}(x)/\hat{\alpha}_{n}.
\end{equation*}%
But by our assumption $E((\beta _{j}(Y)|X=x)\allowbreak $ is a polynomial of
the full order $j.$ So we have:%
\begin{equation*}
\int_{\limfunc{supp}\nu }\beta _{j}(y)F(dx,dy)=d\mu (x)\sum_{n\geq j}\gamma
_{n,j}\alpha _{n}(x)/\hat{\alpha}_{n}.
\end{equation*}%
Now, by the uniqueness of expansion, we deduce that $\forall n>j:\gamma
_{n,j}\allowbreak =\allowbreak 0.$
\end{proof}

Szab\l owski in the series of papers \cite{SzablPoly}, \cite{SzabMark}, \cite%
{SzabStac} considered Markov stochastic processes having two-dimensional
finite distributions belonging to PC class. Hence, now, in light of the
above-mentioned theorem, there is a possibility of expanding in the
Lancaster-like series the transition functions of such Markov processes.

Let us apply Theorem \ref{EXPA} to the analysis of the LC distributions or
more precisely to the analysis when the series 
\begin{equation}
\sum_{n\geq 0}c_{n}\alpha _{n}(x)\beta _{n}(y)  \label{cs}
\end{equation}%
converges to a nonnegative function of $\left( x,y\right) ,$ where
polynomials $\left\{ \alpha _{n}\right\} $ and $\left\{ \beta _{n}\right\} $
are defined as in the introduction to Section \ref{prob}. To simplify the
formulation of the theorem and the applications following it, let us assume
additionally that both families of polynomials $\left\{ \alpha _{n}\right\} $
and $\left\{ \beta _{n}\right\} $ are orthonormal with respect to the
measures $\mu $ and $v$ respectively.

Let us also denote by $C(\alpha ,\beta )$ set of all sequences $\left\{
c_{n}\right\} $ for which the sum (\ref{cs}) exists and is positive. A.E.
Koudu in \cite{Koudu96} showed that this set is convex (which is trivial,
see, e.g. Appendix) and moreover, compact with respect to the weak topology.
Hence the Choquet's theorem about extreme points can be applied. As a
corollary of the Theorem \ref{EXPA} we have the following result:

\begin{theorem}
\label{Lancaster}Let the numbers $\left\{ a_{n,j}\right\} $ and $\left\{
b_{n,j}\right\} $ be defined by the polynomials $\alpha _{n}$ and $\beta
_{n} $ in the following way:%
\begin{equation}
\alpha _{n}(x)=\sum_{j=0}^{n}a_{n,j}x^{j},~~\beta
_{n}(y)=\sum_{j=0}^{n}b_{n,j}y^{j}.  \label{anj}
\end{equation}%
The series (\ref{cs}) converges in the mean-squares to a positive function
if and only if the following system of recurrent equations 
\begin{eqnarray}
\sum_{j=0}^{n}a_{n,j}m_{j}^{(a)}(y) &=&c_{n}\beta _{n}(y),  \label{1st} \\
\sum_{j=0}^{n}b_{n,j}m_{j}^{(b)}(x) &=&c_{n}\alpha _{n}(x),  \label{2d}
\end{eqnarray}%
for $n\geq 0$, is satisfied for almost all $y\in \limfunc{supp}v$ and $x\in 
\limfunc{supp}\mu $, by the two polynomial moment sequences $\left\{
m_{n}^{(a)}(y)\right\} $ and $\left\{ m_{n}^{(b)}(x)\right\} $, that are
defined by some measures, that are absolutely continuous with respect to the
measures $\mu $ and $v$ respectively.
\end{theorem}

\begin{proof}
Firstly, under our assumptions we have $\hat{\alpha}_{n}\allowbreak
=\allowbreak \hat{\beta}_{n}\allowbreak =\allowbreak 1,$ hence following the
previous theorem we deduce that the series (\ref{cs}) converges in mean
squares to some positive function iff 
\begin{equation*}
E\alpha _{n}(x)\allowbreak =\allowbreak c_{n}\beta _{n}(y),
\end{equation*}%
where the expectation is taken with respect to some absolutely continuous
measure that is additionally parametrized by the parameter $y,$ that belongs
to the $\limfunc{supp}v.$ From this remark follows directly the first of the
above-mentioned equation. By a similar argument we deduce that the second
equation holds.
\end{proof}

\begin{remark}
\label{Momseq}Notice that if for $c_{n}\allowbreak =\allowbreak \rho ^{n},$
for $\left\vert \rho \right\vert <1,$ the series (\ref{cs}) is convergent to
a positive bivariate probability density, then so is the series 
\begin{equation*}
\sum_{j\geq 0}(\int_{[-1,1]}\rho ^{j}d\gamma (\rho ))\alpha _{j}(x)\beta
_{j}(y),
\end{equation*}%
for any probability distribution $\gamma $ such that $\gamma (\left\{
-1\right\} \cup \left\{ 1\right\} )\allowbreak =\allowbreak 0.$
\end{remark}

\begin{remark}
In fact, the equations (\ref{1st}) and (\ref{2d}) should be written in the
following, less legible but more precise, recursive way:

\begin{eqnarray}
m_{n}^{(a)}(y) &=&c_{n}\frac{b_{n,n}}{a_{n,n}}y^{n}+%
\sum_{j=0}^{n-1}(c_{n}b_{n,j}y^{j}-a_{n,j}m_{j}^{(a)}(y))/a_{n,n},
\label{1stb} \\
m_{n}^{(b)}(y) &=&c_{n}\frac{a_{n,n}}{b_{n,n}}y^{n}+%
\sum_{j=0}^{n-1}(c_{n}a_{n,j}y^{j}-b_{n,j}m_{j}^{(b)}(y))/b_{n,n}.
\label{2db}
\end{eqnarray}

for $n\geq 0$ with $m_{0}(x)\allowbreak =\allowbreak m_{0}(y)\allowbreak
=\allowbreak 1$.
\end{remark}

\begin{corollary}
\label{cnn}Let coefficients $\left\{ a_{n,n}\right\} $ and $\left\{
b_{n,n}\right\} $ be defined by (\ref{anj}). If the series (\ref{cs})
converges to a positive sum, then

a) $\sum_{n\geq 0}c_{n}^{2}<\infty ,$

further additionally:

b) if $\left\{ 0\right\} \in \limfunc{supp}\mu $ then, we have: $\infty
>\sum_{n\geq 0}c_{n}a_{n,0}b_{n,0}\geq 0,$

c) if $\limfunc{supp}\mu $ is unbounded, then the sequence $\left\{ c_{n}%
\frac{a_{n,n}}{b_{n,n}}\right\} $, is a moment sequence, if additionally $%
\limfunc{supp}v$ is also unbounded, then $\left\{ c_{n}^{2}\right\} $ must
be a moment sequence. 

d) if measures $\mu $ and $v$ are the same and have unbounded supports, then 
$\left\{ c_{n}\right\} $ must be a moment sequence.
\end{corollary}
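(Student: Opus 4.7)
The plan is to read off each of the four claims from the representation provided by Theorem \ref{Lancaster} together with elementary facts about orthonormal expansions and the Hamburger moment problem. Let $f(x,y)$ denote the nonnegative sum of the series (\ref{cs}).

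Part (a) I would obtain by Parseval: by (\ref{L2}), $f$ lies in $L^{2}(\mu\otimes\nu)$, and the family $\{\alpha_{i}(x)\beta_{j}(y)\}_{i,j\geq 0}$ is orthonormal in this space, so the expansion of $f$ in it has coefficients $c_{n}\delta_{ij}$ (only the diagonal surviving). Parseval's identity then yields $\sum_{n\geq 0}c_{n}^{2}=\|f\|_{L^{2}(\mu\otimes\nu)}^{2}<\infty$. For (b), the role of the hypothesis $\{0\}\in\limfunc{supp}\mu$ (and implicitly $\limfunc{supp}\nu$) is to make pointwise evaluation of $f$ at $(0,0)$ meaningful. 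From (\ref{anj})--(\ref{bnj}), $\alpha_{n}(0)=a_{n,0}$ and $\beta_{n}(0)=b_{n,0}$, so setting $x=y=0$ in the series gives $\sum_{n\geq 0}c_{n}a_{n,0}b_{n,0}=f(0,0)\geq 0$. If $0$ is a common atom of $\mu$ and $\nu$ the substitution is immediate; otherwise one averages $f$ against $\mu\otimes\nu$ over shrinking neighborhoods of $(0,0)$ and applies Lebesgue differentiation together with dominated convergence to swap the limit and the sum.

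The core argument is for (c). The recursion (\ref{2db}) shows by induction on $n$ that $m_{n}^{(b)}(x)$ is a polynomial of degree at most $n$ in $x$ with leading coefficient $c_{n}a_{n,n}/b_{n,n}$. By Theorem \ref{Lancaster}, for $\mu$-almost every $x\in\limfunc{supp}\mu$ the sequence $\{m_{n}^{(b)}(x)\}_{n\geq 0}$ is the moment sequence of a positive measure $\lambda_{x}$ that is absolutely continuous with respect to $\nu$. Pushing $\lambda_{x}$ forward under $y\mapsto y/x$ (for $x\neq 0$) produces a positive measure whose $n$-th moment equals $m_{n}^{(b)}(x)/x^{n}$, so this scaled sequence is itself a moment sequence for each such $x$. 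Since $\limfunc{supp}\mu$ is unbounded, letting $|x|\to\infty$ along it forces $m_{n}^{(b)}(x)/x^{n}\to c_{n}a_{n,n}/b_{n,n}$ for every fixed $n$. The cone of Hamburger moment sequences is closed under pointwise limits (Hankel positivity is a closed condition), hence the limiting sequence $\{c_{n}a_{n,n}/b_{n,n}\}$ is itself a moment sequence. Part (d) is then the symmetric case $\mu=\nu$: we may take $\alpha_{n}=\beta_{n}$ and hence $a_{n,n}=b_{n,n}$, which reduces the moment sequence in (c) to $\{c_{n}\}$.

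The main obstacle lies in the limit step of (c), namely justifying that a pointwise limit of Hamburger moment sequences is again a moment sequence; the Hankel-positivity criterion makes this clean but is worth stating explicitly, together with the observation that rescaling a positive measure preserves positivity. A secondary subtlety is the pointwise evaluation in (b), which I would handle either under the atom interpretation of the hypothesis or by the local-averaging argument sketched above.
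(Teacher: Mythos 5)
Your proof is correct and follows essentially the same route as the paper's: orthonormality/Parseval for (a), evaluation of the nonnegative sum at the origin for (b), and for (c) the identification of the leading coefficient of $m_{n}^{(b)}(x)$ as $c_{n}a_{n,n}/b_{n,n}$ combined with rescaling by $x^{-n}$ and letting $|x|\to\infty$ along the unbounded support, with (d) as the symmetric specialization $\mu=\nu$. You merely make explicit two steps the paper leaves implicit, namely that rescaling a positive measure preserves the moment-sequence property and that Hankel positivity makes the cone of Hamburger moment sequences closed under pointwise limits.
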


\begin{proof}
Part a) follows the fact that the series (\ref{Ors}) converges in
mean-squares and that $\hat{p}_{n}\allowbreak =\allowbreak 1$ since we
consider only orthonormal polynomials. b) is obvious. c) Firstly, notice
that in all cases from the system of equations (\ref{1stb}) and (\ref{2db})
it follows that the leading coefficient in $m_{n}^{(a)}(y)$ and $%
m_{n}^{(b)}(y)$ must be respectively $c_{n}b_{n,n}/a_{n,n}$ and $%
c_{n}a_{n,n}/b_{n,n}.$ Now, if, say $\limfunc{supp}\mu $ is unbounded, then
from the fact that $\left\{ m_{n}^{(b)}(y)\right\} $ must be a moment
sequence, then so must be the sequence $\left\{
y^{n}c_{n}a_{n,n}/b_{n,n}\right\} .$ From this fact the first assertion
follows immediately. Now, if both sequences $\left\{ c_{n}\frac{a_{n,n}}{%
b_{n,n}}\right\} ,$ $\left\{ c_{n}\frac{b_{n,n}}{a_{n,n}}\right\} $ are the
moment ones, then is their their product (see the Appendix, below). Part d)
follows directly from c).
\end{proof}

\begin{remark}
The assertion d) of the above-mentioned corollary repeats in fact the result
of Tyan et al. presented in \cite{Ty76}.
\end{remark}

\begin{remark}
Theorem \ref{Lancaster}, at least theoretically, closes the problem of
finding conditions for the convergence to a positive bivariate function of
the infinite series (\ref{cs}). Namely, having two sequences of moments
(what is important given by the recursive formula) one can find their two
Laplace transforms and invert them obtaining two conditional measures $\chi
(.|x)$ and $\zeta (.|y)$ that are also defined by the conditions 
\begin{eqnarray*}
\int \beta _{n}(y)d\chi (y|x) &=&c_{n}\alpha _{n}(x), \\
\int \alpha _{n}(x)d\zeta (x|y) &=&c_{n}\beta _{n}(y).
\end{eqnarray*}%
The procedure to get these inverses is very difficult and long. On the way,
the procedure utilizes Nevalinna's theory as described say in \cite%
{Akhizer65}. Now, the question of summing the series (\ref{cs}) is solved by
the formula (\ref{suma}).
\end{remark}

\begin{example}
We will now present an example in which we show how having a given family of
orthogonal polynomials, and a moments sequence $\left\{ c_{n}\right\} ,$ one
finds a sequence of moments $\left\{ m_{n}(y)\right\} $. Then, having this
sequence one finds a sequence of orthogonal polynomials parametrized by $y.$
Then, by different means, including the analysis of the three-term
recurrence of this sequence, one finds the properties of the measure having
moments sequence $\left\{ m_{n}(y)\right\} $ and thus conclude, basing on
Theorem \ref{Lancaster}, that the series (\ref{cs}) with $\alpha
_{n}(x)\allowbreak =\allowbreak \beta _{n}(x)$ converges to a positive sum.
The way is long and it seems that each case would be enough for an article.
To shorten the conclusions and description, the example will concern Hermite
polynomials that are well known and the sequence $c_{n}\allowbreak
=\allowbreak \rho ^{n}$ for some $\left\vert \rho \right\vert <1,$  just to
illustrate the process. Note, that such sequence $\left\{ c_{n}\right\} $ is
a moment sequence. On the way, we will make use of the well-known properties
of these polynomials. Besides in this case it is easy just to guess the
conditioning measure $d\chi (.|y).$

Let us recall that the so-called (probabilistic) Hermite polynomials are
defined by the following three-term recurrence 
\begin{equation*}
H_{n+1}(x)=xH_{n}(x)-nH_{n-1}(x),
\end{equation*}%
with $H_{0}(x)\allowbreak =\allowbreak 1,$ and $H_{-1}(x)\allowbreak
=\allowbreak 0.$ It is known, that we have%
\begin{equation*}
\frac{1}{2\pi }\int_{-\infty }^{\infty }H_{n}(x)H_{m}(x)\exp
(-x^{2}/2)dx=\delta _{nm}n!.
\end{equation*}

Moreover, for all complex $x,y$ and $a$ the following expansions are true:%
\begin{eqnarray}
H_{n}(x) &=&n!\sum_{m=0}^{\left\lfloor n/2\right\rfloor }\frac{%
(-1)^{m}x^{n-2m}}{2^{m}m!(n-2m)!},  \label{wspH} \\
H_{n}(ax+\sqrt{1-a^{2}}y) &=&\sum_{m=0}^{n}\binom{n}{m}%
a^{m}(1-a^{2})^{(n-m)/2}H_{m}(x)H_{n-m}(y).  \label{Hsum}
\end{eqnarray}%
Notice that 
\begin{equation*}
H_{n}(0)\allowbreak =\allowbreak \left\{ 
\begin{array}{ccc}
0 & if & n\text{ is odd} \\ 
\frac{(-1)^{k}(2k)!}{2^{k}k!} & if & n=2k.%
\end{array}%
\right. 
\end{equation*}%
and also that the orthonormal version of Hermite polynomials is equal to $%
H_{n}(x)/\sqrt{n!}$, so for even $n$ we have 
\begin{equation*}
\left\vert H_{2k}(0)\right\vert ^{2}/(2k)!\allowbreak =\allowbreak \frac{%
(2k)!}{2^{2k}(k!)^{2}}\allowbreak =\frac{1}{2^{2k}}\binom{2k}{k}=\frac{1}{%
\sqrt{\pi k}}.
\end{equation*}%
We used here the well-known approximation $\binom{2k}{k}\frac{1}{2^{2k}}%
\allowbreak \cong \allowbreak \frac{1}{\sqrt{\pi k}}$. Thus, applying
Corollary \ref{cnn}, we see that every applicable sequence $\left\{
c_{n}\right\} $ must satisfy the following conditions: 
\begin{equation*}
\sum_{n\geq 0}c_{n}^{2}<\infty ,~\infty >\sum_{n\geq 0}c_{n}/\sqrt{n+1}\geq
0,
\end{equation*}%
$\left\{ c_{n}\right\} $ is a moment sequence.

Now, let us recall Remark \ref{Momseq} and examine the case $%
c_{n}\allowbreak =\allowbreak \rho ^{n}$ for some $\left\vert \rho
\right\vert <1.$ Notice, that this sequence satisfies the above-mentioned
conditions, since $\sum_{n\geq 0}\rho ^{n}/\sqrt{n+1}\allowbreak
=\allowbreak Li(\frac{1}{2};\rho )/\rho \geq 0,$ $\rho \in (-1,1)$ , where $%
Li(s;\rho )$ is the so-called polylogarithm function of order $s.$

We guess, that the measure $v(dx|y)$ has the density 
\begin{equation*}
g(x|y,\rho )=\frac{1}{\sqrt{2\pi (1-\rho ^{2})}}\exp (-\frac{(x-\rho y)^{2}}{%
2(1-\rho ^{2})}).
\end{equation*}

From this, we immediately deduce, that 
\begin{equation*}
\int_{-\infty }^{\infty }H_{n}(\frac{x-\rho y}{\sqrt{1-\rho ^{2}}}%
)g(x|y,\rho )dx=0,
\end{equation*}%
for $n\geq 1.$ Let us denote: 
\begin{equation*}
m_{n}(y)=\int_{-\infty }^{\infty }x^{n}g(x|y,\rho )dx.
\end{equation*}%
Now we change variables to $z\allowbreak =\allowbreak \frac{x-\rho y}{\sqrt{%
1-\rho ^{2}}}$ getting $x\allowbreak =\allowbreak \rho y+\sqrt{1-\rho ^{2}}%
z. $ Now, applying (\ref{Hsum}) we get:%
\begin{equation}
m_{n}(y)=\sum_{j=0}^{\left\lfloor n/2\right\rfloor }\binom{n}{2j}(1-\rho
^{2})^{j}(2j-1)!!\rho ^{n-2j}y^{n-2j},  \label{mn}
\end{equation}%
since for $k\allowbreak =\allowbreak 0,1,\ldots $ : 
\begin{equation*}
\int_{-\infty }^{\infty }z^{n}\exp (-\frac{z^{2}}{2})dz\allowbreak
=\allowbreak \left\{ 
\begin{array}{ccc}
0 & if & n=2k+1 \\ 
(2k-1)!! & if & n=2k%
\end{array}%
.\right.
\end{equation*}

Moreover, we get 
\begin{equation*}
\int_{-\infty }^{\infty }H_{n}(x)g(x|y,\rho )dx\allowbreak =\allowbreak
\int_{-\infty }^{\infty }H_{n}(\rho y+z\sqrt{1-\rho ^{2}})\exp (-\frac{z^{2}%
}{2})dz=\rho ^{n}H_{n}(y),
\end{equation*}%
after changing variables, as above, and applying (\ref{Hsum}). Thus, there
is no need to check that indeed the sequence (\ref{mn}) is the solution of
the system of equations (\ref{1stb}) with $\alpha _{n}(x)\allowbreak
=\allowbreak \beta _{n}(x)\allowbreak =\allowbreak H_{n}(x)$ for all $n\geq 0
$ and $c_{n}\allowbreak =\allowbreak \rho ^{n}.$

By the way, we have also:%
\begin{equation*}
\sum_{n\geq 0}m_{n}(y)\frac{t^{n}}{n!}=\exp (-t^{2}(1-\rho ^{2})/2+yt).
\end{equation*}

Returning to Remark \ref{Momseq} we see that the series 
\begin{equation*}
\sum_{n\geq 0}\frac{c_{n}}{n!}H_{n}(x)H_{n}(y),
\end{equation*}%
converges to a positive density iff $c_{n}\allowbreak =\allowbreak
\int_{-1}^{1}\rho ^{n}d\gamma (\rho ),$ $n\allowbreak =\allowbreak
0,1,\ldots ,$ where $\gamma $ is some probability distribution on $[-1,1],$
such that $\gamma (\left\{ -1\right\} \cup \left\{ 1\right\} )\allowbreak
=\allowbreak 0$. This fact was already noticed by Sarmanov and Bratoeva in
1967 in \cite{Sarman67}. Later this result was generalized by Griffith in 
\cite{Griff69} and Koudu in \cite{Koudu98} with Hermite polynomials replaced
by polynomials orthogonalizing gamma distribution (Griffith) and Poisson and
negative binomial (Koudu). The Koudu's results were later applied to
parameter testing of the chosen Lancaster bivariate distributions by Chen in 
\cite{Chen20}.
\end{example}

\appendix\textbf{Appendix. }{}\emph{A few facts of the moment problem and
the orthogonal polynomials. }

Let be $\alpha $ be a signed, finite measure on the real line. Then the
sequence of reals $\left\{ m_{n}\right\} _{n}$ defined by: $m_{n}=\int
x^{n}d\alpha (x)$, is called a \emph{sequence of moments or the moment
sequence (sm) }of the measure $\alpha \text{.}\ $Below we have the
surprising general result by Boas \cite{Boas39}.

\begin{theorem}
Any sequence $\left\{ m_{n}\right\} _{n\geq 0}$ of real numbers can be
represented in the form: $m_{n}=\int x^{n}d\alpha (x)\ $,$\ $ where $\int
|d\alpha (x)|<\infty \ $.
\end{theorem}

In other words any sequence of numbers is a moment sequence of some signed
measure.

We will be interested in sequences of moments of positive measures.

It turns out (see, e.g. \cite{Sim98}), that the sequence $\left\{
m_{n}\right\} $ is a moment sequence of some nonnegative measure $\alpha $,
iff it satisfies the following condition for $\forall n\geq 0:$ 
\begin{equation}
d_{n}=\det [m_{i+j}]_{0\leq i,j\leq n}\geq 0.  \label{han}
\end{equation}%
$\ $The sequence $\left\{ d_{n}\right\} $ related to the sequence $\left\{
m_{n}\right\} $ and defined by ( \ref{han}), is called the Hankel transform
of the sequence $\left\{ m_{n}\right\} .$ $\ $It is also known (see, e.g. 
\cite{Sim98}) that if the sharp inequality in (\ref{han}) holds for all $%
n\geq 1$ then the support of $\alpha $ is \emph{infinite}. If additionally $%
\forall n\geq 0:$ 
\begin{equation*}
\det [m_{1+i+j}]_{0\leq i,j\leq n}\geq 0,
\end{equation*}%
then $\limfunc{supp}\alpha \subset \lbrack 0,\infty ).$

Sequences $\left\{ m_{n}\right\} $ that are the moment sequences of some
nonnegative measures will me called p(ositive)m(oment) sequences i.e. pm
sequences.

Let us mention the two simple necessary conditions for a sequence of reals
to be a pm sequence.

\begin{condition}[Necessary]
Let $\left\{ d_{n}\right\} _{n\geq 0}$ a pm sequence. Then a) $d_{2k}\geq 0,$
$k\allowbreak =\allowbreak 0,1,\ldots $ . b) $\left\vert d_{k}\right\vert
^{2}\leq d_{2k}d_{0},$ c) sequence $\left\{ d_{2k}^{1/(2k)}\right\} $ is
non-decreasing.
\end{condition}

\begin{proof}
Assertion a) is obvious, b) follows directly Cauchy-Schwarz inequality while
c) follows Jensen's inequality.
\end{proof}

In the sequel we will assume that the measure $\alpha $ is a probability
measure i.e. $\int d\alpha (x)=1$ which results in the fact that all pm
sequences will have $m_{0}=1.$

The generating function $\varphi $ of the pm sequence is defined by the
following formula:%
\begin{equation*}
\varphi (t,\alpha )\allowbreak =\allowbreak \sum_{n\geq
0}t^{n}m_{n}/n!\allowbreak =\allowbreak \sum_{n\geq 0}\int_{\limfunc{supp}%
\mu }\frac{(xt)^{n}}{n!}d\alpha (x)\allowbreak =\allowbreak \int_{\limfunc{%
supp}\mu }\exp (tx)d\alpha (x).
\end{equation*}

Hence, if the moment problem is determinate (i.e. the measure $\alpha $ is
identified by its sequence of moments) if this Laplace transform exists for
even small neighborhood of zero.

For the aims of this paper, this criterion of determinacy is more important.
However, for the sake of completeness, let us remark that there exists,
however, another criterion given by Carleman (see, e.g. \cite{Akhizer65} or 
\cite{Chih79}) where the determinacy follows the properties of the moment
sequence itself. Namely, Carleman's criterion reads: If only%
\begin{equation*}
\sum_{n\geq 1}m_{2n}^{-1/(2n)}=\infty ,
\end{equation*}%
then the sequence of moments $\left\{ m_{n}\right\} $ defines uniquely the
measure that created this sequence.

It is known (see, e.g. \cite{Akhizer65} or \cite{Chih79}), that the sequence
of polynomials orthogonal with respect to the measure that produced a given
moment sequence $\left\{ m_{n}\right\} $ is given by the following sequence:%
\begin{equation*}
p_{n}(x)=\det \left[ 
\begin{array}{cccc}
m_{0} & m_{1} & \ldots & m_{n} \\ 
m_{1} & m_{2} & \ldots & m_{n+1} \\ 
\ldots & \ldots & \ldots & \ldots \\ 
0 & x & \ldots & x^{n}%
\end{array}%
\right] ,
\end{equation*}%
$n\allowbreak =\allowbreak 1,2,\ldots $ $.$

Further, it is also known (see, e.g. \cite{Akhizer65} or \cite{Chih79}) that
for every orthogonal polynomial sequence $\left\{ p_{n}\right\} $ one can
define three sequences of numbers $\left\{ A_{n}\right\} ,$ $\left\{
B_{n}\right\} ,$ $\left\{ C_{n}\right\} $, such that for every $n\geq 0:$%
\begin{equation*}
p_{n+1}(x)\allowbreak =\allowbreak (A_{n}x+B_{n})p_{n}(x)-C_{n}p_{n-1}(x),
\end{equation*}%
and for $n\geq 1:C_{n}A_{n}A_{n-1}>0$, provided the support of the measure
making these polynomials orthogonal is infinite. These real sequences are
defined by numbers $\left\{ d_{n}\right\} $ given by (\ref{han}). For
details see again \cite{Akhizer65} or \cite{Chih79}.

We have also the following simple observations concerning the properties of
pm sequences.

\begin{proposition}
Let $\left\{ a_{n}\right\} _{n\geq 0}$ and $\left\{ b_{n}\right\} _{n\geq 0}$
be two pm sequences. Then, so are the following sequences:

1. $\left\{ pa_{n}+b_{n}(1-p)\right\} _{n\geq 0}\text{, }$for $p\in \lbrack
0,1]$, $\left\{ \sum_{i=0}^{n}(\pm 1)^{i}\binom{n}{i}\alpha ^{i}\beta
^{n-i}a_{i}b_{n-i}\right\} _{n\geq 0}$, for $\alpha ,\beta \in \mathbb{R}$ , 
$\left\{ a_{n}b_{n}\right\} _{n\geq 0},$

2.$\ $ $\left\{ a_{kn}\right\} _{n\geq 0}\text{, }k\in \mathbb{N}$,$\ $ $%
c_{n}=\left\{ 
\begin{array}{ccc}
a_{2k} & if & n=2k \\ 
0 & if & n=2k+1%
\end{array}%
\right. \ \text{,}\ $ $k=0,1,\ldots $ . If a pm sequence $\left\{
a_{n}\right\} _{n\geq 0}$ is nonnegative then also pm is the following
sequence: $b_{n}^{\ ^{\prime }}=\left\{ 
\begin{array}{ccc}
0 & if & n=2k+1 \\ 
a_{k} & if & n=2k%
\end{array}%
\right. \ \text{.}\ $

3. The following sequences : $\forall a\in \mathbb{R}:$%
\begin{equation*}
\left\{ a^{n}\right\} _{n\geq 0}\ \text{,}\ \left\{ 
\begin{array}{ccc}
1 & if & n=0 \\ 
0 & if & n\text{ is odd} \\ 
(2k-1)!! & if & n=2k%
\end{array}%
\right.
\end{equation*}%
$,\allowbreak k=1,2,\ldots \ \text{,}\ $ Catalan numbers i.e. $\left\{ 
\binom{2n}{n}/(n+1)\right\} _{n\geq 0}\ \text{,}\ $ $\left\{ n!\right\}
_{n\geq 0}\ \text{,}\ \forall k>-1:\{1/(n+1)^{k+1}\}_{n\geq 0},$~ $\left\{
F_{n+1}\right\} _{n\geq 0}\ \text{,}\ $ $\left\{ F_{n+1}/(n+1)\right\}
_{n\geq 0}\ \text{,}\ $ $\left\{ F_{2n+2}/(n+1)\right\} _{n\geq 0}\ \text{,}%
\ $ $\left\{ (F_{2n+1}-1)/(n+1)\right\} _{n\geq 0}$ where $F_{n}$ denotes $%
n- $th Fibonacci number, are pm sequences.
\end{proposition}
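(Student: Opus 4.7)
The plan is to exploit the probabilistic reading of the Hankel criterion (\ref{han}): a sequence with $m_0=1$ is a pm sequence iff it is the moment sequence of some real random variable. I would fix independent $X\sim\alpha$, $Y\sim\beta$ with $a_n=E[X^n]$, $b_n=E[Y^n]$, and for each claimed pm sequence either exhibit a random variable whose moments match or display an explicit probability measure having those moments. This bypasses any direct Hankel computation: positivity comes for free from the representing measure.

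Part 1 is then immediate. The convex combination $\{pa_n+(1-p)b_n\}$ is realized by the mixture $p\alpha+(1-p)\beta$; by independence $a_nb_n=E[X^n]E[Y^n]=E[(XY)^n]$, giving $XY$; and the binomial sum equals $E[(\beta Y\pm\alpha X)^n]$ by the binomial theorem. For part 2, $\{a_{kn}\}=\{E[(X^k)^n]\}$ is realized by $X^k$; the symmetrization with zeros at odd indices arises from $\epsilon X$ where $\epsilon$ is an independent Rademacher sign; and when $\alpha$ is supported in $[0,\infty)$, which is the intended reading of ``$\{a_n\}$ nonnegative'' via the second Hankel condition stated just after (\ref{han}), the variable $\epsilon\sqrt X$ realizes $a'_{2k}=a_k$, $a'_{2k+1}=0$.

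For part 3 I would exhibit an explicit representing measure in each case: $\delta_a$ for $\{a^n\}$; $\mathcal N(0,1)$ for the double-factorial sequence; the Marchenko--Pastur-type density $\tfrac{1}{2\pi}\sqrt{(4-x)/x}$ on $[0,4]$ for the Catalan numbers; $e^{-x}\,dx$ on $[0,\infty)$ for $\{n!\}$; and the density $(-\log x)^k/\Gamma(k+1)$ on $(0,1)$ for $\{1/(n+1)^{k+1}\}$, verified by the substitution $u=-\log x$. For the Fibonacci entries I would invoke Binet's formula $F_m=(\varphi^m-\psi^m)/\sqrt5$ with $\psi=-1/\varphi$: then $\{F_{n+1}\}$ consists of the moments of the two-point law $\tfrac{\varphi}{\sqrt5}\delta_\varphi+\tfrac{-\psi}{\sqrt5}\delta_\psi$ (both weights positive and summing to $1$), while $F_{2n+2}/(n+1)=\int_{\psi^2}^{\varphi^2}x^n\,dx/\sqrt5$ identifies $\{F_{2n+2}/(n+1)\}$ with the uniform law on $[\psi^2,\varphi^2]$; the remaining normalized Fibonacci entries reduce to these via the product rule of part 1 applied with the uniform moments $\{1/(n+1)\}$ on $[0,1]$.

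The main obstacle is the Fibonacci block, where Binet expansions typically produce signed combinations of exponentials with cancellation; the trick each time is to regroup so that both coefficients become positive (as in $\{F_{n+1}\}$) or to absorb the negative exponential into a uniform density on $[\psi^2,\varphi^2]$. A separate minor nuisance is the entry ``$\{(F_{2n+1}-1)/(n+1)\}$'', whose $2\times 2$ Hankel determinant fails when the sequence is read literally from $n=0$ (since $m_0=0$ while $m_1\ne 0$); an index shift or reinterpretation appears to be intended, after which the same Binet-plus-uniform strategy closes the case. Apart from this, the proposition is essentially a catalogue of explicit measures organized by the two closure operations of parts 1 and 2.
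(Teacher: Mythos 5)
Your argument coincides with the paper's own proof almost everywhere: the paper proves parts 1 and 2 by exactly the same probabilistic constructions (mixture for the convex combination, $XY$ for the product, $\beta Y\pm\alpha X$ for the binomial sum, $X^k$, the symmetrized $\pm X$, and $\pm\sqrt X$ for the nonnegative case), and part 3 by listing the same explicit densities ($\delta_a$, $N(0,1)$, the semicircle-type density $\frac{1}{2\pi}\sqrt{(4-x)/x}$ on $(0,4)$, $e^{-x}$, and $(-\log x)^k/\Gamma(k+1)$). The one genuine divergence is the Fibonacci block: the paper disposes of all four Fibonacci entries with a bare citation to Bennett, whereas you actually prove them, and your constructions check out --- Binet's formula gives $F_{n+1}=\frac{\varphi}{\sqrt5}\varphi^n+\frac{-\psi}{\sqrt5}\psi^n$ with positive weights summing to $1$, and $F_{2n+2}/(n+1)=\frac{1}{\sqrt5}\int_{\psi^2}^{\varphi^2}x^n\,dx$ is exactly the $n$-th moment of the uniform law on $[\psi^2,\varphi^2]$ since that interval has length $\sqrt5$; the product rule then handles $F_{n+1}/(n+1)$. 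Your observation about $\{(F_{2n+1}-1)/(n+1)\}$ is also correct and is something the paper's proof silently skips: read literally from $n=0$ one gets $m_0=0$, $m_1=\tfrac12$, so $d_1=m_0m_2-m_1^2<0$ and the sequence violates (\ref{han}); only a nonzero measure can have $m_0=0$ fail, so some index shift must be intended, and flagging this is a real (if minor) correction to the statement. In short: same method throughout, but you supply the missing proofs where the paper defers to a reference, which is the more self-contained route at the cost of a little Binet bookkeeping.
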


\begin{proof}
1. The arguments are probabilistic. Let $X$ and $Y$ be two independent
random variables having respectively moments $\left\{ a_{n}\right\} $ and $%
\left\{ b_{n}\right\} \text{.}\ $ Then $\left\{ a_{n}b_{n}\right\} $ is the
moment sequence of $XY\ \text{,}\ $ $\left\{ \sum_{i=0}^{n}(\pm 1)^{i}\binom{%
n}{i}\alpha ^{i}\beta ^{n-i}a_{i}b_{n-i}\right\} _{n\geq 0}$ is the moment
sequence of $(\beta Y\pm \alpha X).$ Let $Z$ have the so-called mixture
distribution of the distributions of random variables $X$ and $Y$ having
moment sequences, respectively $\left\{ a_{n}\right\} $ and $\left\{
b_{n}\right\} $ . Then $\left\{ pa_{n}+b_{n}(1-p)\right\} _{n\geq 0}$ is the
moment sequence of $Z\ \text{.}\ $ $\ $

2. $\left\{ a_{kn}\right\} _{n\geq 0}$ is the moment sequence of $X^{k}$. To
get remaining statements of this we consider special mixtures of the
independent copies of $X$ and $-X$ to get first statement and $\sqrt{X}$ and 
$-\sqrt{X}$ to get the second.

2. $\left\{ a^{n}\right\} _{n\geq 0}$ is the moment sequence of the
one-point distribution concentrated at $a\ $,$\ $ 
\begin{equation*}
\left\{ 
\begin{array}{ccc}
1 & if & n=0 \\ 
0 & if & n=2k-1 \\ 
(2k-1)!! & if & n=2k%
\end{array}%
\right. ,
\end{equation*}%
$k=1,2,\ldots $ is the moment sequence of Normal $N(0,1)$ distribution,
Catalan numbers are moments of distribution with the density $\frac{1}{2\pi }%
\sqrt{\frac{4-x}{x}}\ \text{,}\ $ $x\in (0,4)\ \text{.}\ $ $\left\{
n!\right\} _{n\geq 0}$ are moments of distribution with the density $\exp
(-x),x\geq 0,$~ $\{1/(n+1)^{k+1}\}_{n\geq 0}$ are the moment sequence of
distributions with the densities $(-\log (x))^{k}/\Gamma (k+1),$ $x\in (0,1)$%
, $k>-1$. For sequences composed of Fibonacci numbers, see \cite{Benn11}.
\end{proof}

Hence in particular the following families of polynomials are the pm
sequences for every $x\in \mathbb{R}$:

$\left\{ \sum_{k=0}^{n}\binom{n}{k}a_{k}(\pm 1)^{n-k}x^{n-k}\right\} _{n\geq
0},$ where $\left\{ a_{n}\right\} _{n\geq 0}$ is the pm sequence hence some
of the Appell polynomial sequences are pm.

\end{document}